\def\qed{\hfill {\hbox{${\vcenter{\vbox{               %HOLLOW SQUARE
   \hrule height 0.4pt\hbox{\vrule width 0.4pt height 6pt
   \kern5pt\vrule width 0.4pt}\hrule height 0.4pt}}}$}}}
\newtheorem{theorem}{Theorem}
\newtheorem{proposition}[theorem]{Proposition}
\theoremstyle{definition}
\newtheorem{example}{Example}
\newtheorem{definition}{Definition}
\def\qed{\hfill {\hbox{${\vcenter{\vbox{               %HOLLOW SQUARE
   \hrule height 0.4pt\hbox{\vrule width 0.4pt height 6pt
   \kern5pt\vrule width 0.4pt}\hrule height 0.4pt}}}$}}}
\def\tr{\triangleright}
\date{}
\title{\Large \textbf{New Polynomial Decategorifications of the Quandle Coloring Quiver}}
\author{Anusha Kabra\footnote{Email: akabra50@students.claremontmckenna.edu} \and
Sam Nelson\footnote{Email: Sam.Nelson@cmc.edu. Partially supported by Simons Foundation collaboration grant 702597}}
\begin{document}
\maketitle

\begin{abstract}
We introduce two new families of polynomial invariants of oriented classical 
and virtual knots and links defined as decategorifications of the quandle 
coloring quiver. We provide examples to illustrate the computation of the 
invariants, show that they are not determined by the quandle counting 
invariant, and compute tables of invariants values for some small quandles.
\end{abstract}

\parbox{5.5in} {\textsc{Keywords:} quandles, quivers, enhancements of counting invariants, knot invariants

\smallskip

\textsc{2020 MSC:} 57K12}

\section{\large\textbf{Introduction}}\label{I}

Introduced in \cite{J} (and independently in \cite{M}, see also \cite{B,FR,T}),
\textit{quandles} are algebraic structures whose axioms encode the 
Reidemeister moves of knot theory. Joyce in \cite{J} showed that the 
knot quandle is a complete invariant up to ambient homeomorphism. Since 
then, quandles have been used to define new knot invariants and to study 
relationships between known link invariants. See \cite{EN} for more.

\textit{Quandle coloring quivers,} introduced in \cite{CN} by the second author
and a coauthor, are quiver-valued invariants of oriented classical
and virtual knots and links which categorify the quandle homset invariant
in the sense that quivers are categories and the quandle homset is the vertex
set of the quandle coloring quiver.

Since large quivers can be difficult to compare, a polynomial invariant
of classical and virtual knots and links were defined in \cite{CN}
as decategorifications of the quandle coloring quiver. The
\textit{in-degree polynomial} uses the observation that while the vertices
of a quandle coloring quiver all have the same out-degree, they can have 
different in-degrees. While these polynomials are easier to compare than the
quivers, passing from the quiver to the polynomial loses some information
in the sense that there are examples of pairs of knots with the same
in-degree polynomial with a respect to a fixed choice of quandle and 
endomorphism set which have non-isomorphic quivers.

In this paper we introduce two new infinite families of polynomial 
invariants of classical and virtual knots and link obtained from the 
quandle coloring quiver via decategorification. The first is a two-variable
version of the in-degree polynomial which specializes to the original in-degree 
polynomial but is a stronger invariant in general. The second is a 
single-variable polynomial which uses a different aspect of the structure
of the quiver, namely its set of maximal non-repeating paths.

The paper is organized as follows. In Section \ref{R} we review the basics
of quandle coloring quivers. in Section \ref{ID} we introduce the two-variable
version of the in-degree polynomial and gives examples to show that it is
a proper enhancement of both the in-degree polynomial and the quandle
counting invariant. in Section \ref{MP} we introduce the maximal path
polynomial and provide examples to show that it is not determined by the 
quandle counting invariant. We conclude in Section \ref{Q} with some questions
for future research.

This paper, including all text, diagrams and computational code,
was produced strictly by the authors without the use of generative AI in any 
form.

\section{\large\textbf{Review of Quandle Coloring Quivers}}\label{R}

In this section we review the basics of quandles, quandle colorings and 
quandle coloring quivers. See \cite{CN,EN} for more.

\subsection{Quandles}

A \textit{quandle} is a set $X$ equipped with a binary operation 
$\triangleright: X\times X \to X $ satisfying the following axioms:

\begin{enumerate}
    \item {Idempotency:}
    \[
    \forall x \in X, \quad x \triangleright x = x.
    \]    
    \item {Right Invertibility:} 
    \[
    \forall x, y \in X, \exists\mathrm{\ a \ unique\ }  z \in X  
\mathrm{\ such\  that\ } x = z \triangleright y.
    \]
    \item {Right Self-Distributivity:}
    \[
    \forall x,y,z \in X, \ (x \triangleright y) \triangleright z = (x \triangleright z) \triangleright (y \triangleright z)
    \]
\end{enumerate}

\begin{example} Every oriented classical or virtual link $L$ has a 
\textit{knot quandle} or \textit{fundamental quandle} $\mathcal{Q}(L)$ 
defined from a diagram $D$ via a presentation with generators corresponding 
to arcs and relations corresponding to crossings.

The knot quandle of the trefoil knot $Q(3_1)$ can be computed by representing 
it visually and labeling each arc $x, y, z$ as below. 

%$\therefore$ 
The trefoil knot has knot quandle presented by 
\[
\mathcal{Q}(3_1)=\langle x, y, z \mid x = y \triangleright z, \quad 
y = z \triangleright x, \quad z = x \triangleright y \rangle.
\]
\end{example}

It can be visualized as shown:
\[\includegraphics{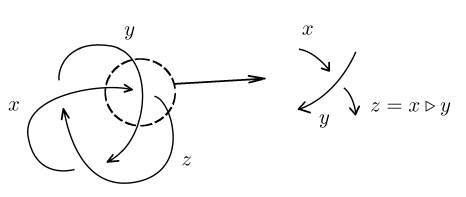}\]

\begin{example} \label{ex:d3}
Finite quandles can be represented by operation tables similar to the example 
shown below for the \textit{dihedral quandle} with $n=3$ where $n$ is the 
number of elements and we define $x\tr y=2y-x$ mod $n$ (writing the class of 
zero mod $n$ as $n$). This quandle will be relevant to the next subsection.

\[
\begin{array}{c|ccc}
\triangleright & 1 & 2 & 3 \\ \hline
1 & 1 & 3 & 2 \\
2 & 3 & 2 & 1 \\
3 & 2 & 1 & 2
\end{array}
\]
\end{example}

\subsection{Quandle Colorings}

A function $f : X \rightarrow Y$ between quandles is called a 
\textit{quandle homomorphism} if for all $x,y \in X$ we have

\[
f(x\triangleright y) = f(x) \triangleright f(y)
\]

A bijective homomorphism is called an \textit{isomorphism}, a self-homomorphism
is called an \textit{endomorphism} and a 
self-isomorphism $f : X \rightarrow X $ is called an \textit{automorphism}.

A quandle homomorphism $f:\mathcal{Q}(K) \to X$ assigns elements of $X$ to 
the generators of $\mathcal{Q}(K)$, coloring each arc of $K$. If arcs are 
labeled as shown, the homomorphism condition  
\[
f(x \triangleright y) = f(x) \triangleright f(y)
\]
must hold, ensuring that $f$ defines a valid coloring. Conversely, any assignment of elements of $X$ to arcs in $K$ that satisfies the crossing relations induces a quandle homomorphism. Thus, quandle colorings provide both a visualization and a computational tool for quandle homomorphisms. 

Let $L$ be an oriented knot or link and $X$ a finite quandle, called the 
\textit{coloring quandle}. The \textit{coloring space} or \textit{homset} is 
the set of quandle homomorphisms 
\[
\mathrm{Hom}(\mathcal{Q}(L), X)=\{f:\mathcal{Q}(L)\to X \ |\ 
f(x \triangleright y) = f(x) \triangleright f(y)\}.
\]
The \textit{quandle counting invariant} is the cardinality of this space, 
$|\text{Hom}(Q(L), X)| $ denoted by $\Phi_X^{\mathbb{Z}}(L)$.

Each homomorphism $\varphi \in \mathrm{Hom}(\mathcal{Q}(L), X)$ corresponds to 
a \textit{coloring} of $L$, assigning elements of $X$ to arcs in a 
link diagram while preserving the quandle operation at crossings. Since each 
arc represents a generator of $\mathcal{Q}(L)$, and $X$ is finite, the number 
of valid colorings is always an integer. Henceforth, we interpret 
homomorphisms in $\text{Hom}(\mathcal{Q}(L), X)$ as colorings of link diagrams.

\begin{example} \label{ex:fox31}
We picture quandle colorings and find the quandle counting invariant 
$|\text{Hom}(Q(L), X)| $ where L is the trefoil and X is the dihedral 
quandle with $n=3$, i.e., the quandle with operation table
\[\begin{array}{r|rrr}
\tr & 1 & 2 & 3 \\ \hline
1 & 1 & 3 & 2 \\
2 & 3 & 2 & 1 \\
3 & 2 & 1 & 3 
\end{array}.\] 
To count the homomorphisms and find the invariant, we need to find the unique 
ways we can validly map the generators $x, y, z$ to elements of $X$. As seen  
earlier, for  the trefoil knot we must satisfy $z = x \triangleright y$, so 
once colors for $x$ and $y$ are chosen, the color for $z$ is determined by the 
operation table of X. 
Below are 9 unique colorings with each arc labeled and each generator 
corresponding to a particular color in $X$.
\[\scalebox{0.9}{\includegraphics{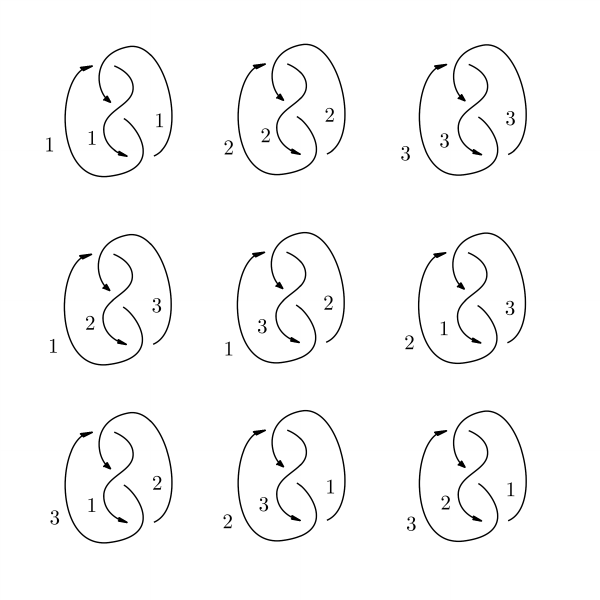}}\]
\end{example}

\subsection{Quandle Coloring Quivers}

A \textit{quandle coloring quiver} is constructed from the set of quandle 
colorings of an oriented knot or link diagram with respect to a finite 
quandle. Each vertex in the quiver represents a unique quandle coloring 
of the knot or link. The edges between vertices are defined by applying 
\textit{endomorphisms} of the coloring quandle, i.e. maps preserving
the quandle operation, to the colorings. This means that 
for each endomorphism $f:X\to X$, there is a directed edge from one 
coloring to another if the endomorphism maps the first coloring to the 
second. More formally:

\begin{definition}
Let $L$ be an oriented classical or virtual knot or link, $X$ a finite
quandle and $S\subseteq \mathrm{Hom}(X,X)$. The \textit{quandle 
coloring quiver} of $X$ with respect to $(X,S)$ is the directed graph 
$\mathcal{QCQ}_{X,S}(L)$
with a  vertex for each $f\in\mathrm{Hom}(\mathcal{Q}(L),X)$ and a directed 
edge from $f$ to $g$ whenever $f=\sigma g$ for some $\sigma\in S$.
\end{definition}

\begin{example}
To compute the quandle coloring quiver for a given link, we can find all of the
$X$-colorings of a diagram of our link and then draw arrows between the 
colorings that are related by endomorphisms. Then for example the maps
$\sigma_1=[1,1,1]$ and $\sigma_2(x)=[2,1,3]$ (where the notation is
$\sigma=[\sigma(1),\dots,\sigma(n)]$) are both endomorphisms of the dihedral 
quandle with $n=3$ from Example \ref{ex:d3}; applying these endomorphisms
to the quandle colorings of the trefoil in Example \ref{ex:fox31}, we obtain 
quandle coloring quiver $\mathcal{QCQ}_{X,S}(3_1)$ where 
$S=\{\sigma_1,\sigma_2\}$ as shown:
\[\includegraphics{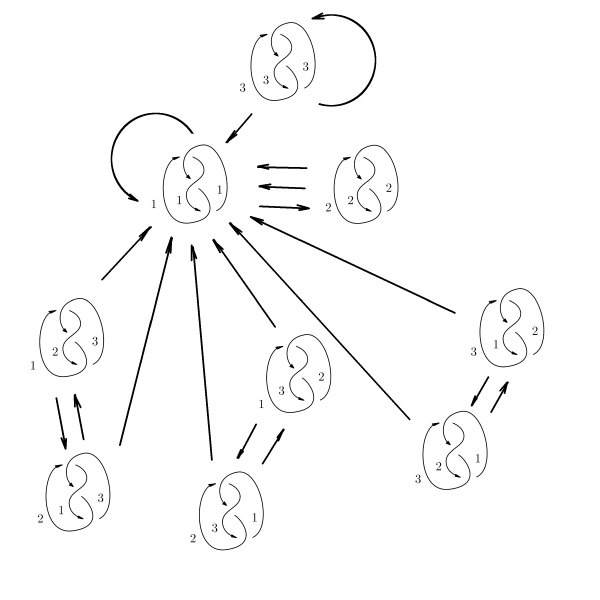}\]
\end{example}

See \cite{CN} for more.

\section{\large\textbf{2-Variable In-Degree Polynomial}}\label{ID}

We begin this section with a definition.

\begin{definition}
Let $X$ be a finite quandle, $S\subset \mathrm{Hom}(X,X)$ a set of 
endomorphisms of $X$, and $L$ an oriented classical or virtual knot
or link represented by a diagram $D$. We define the \textit{two-variable
in-degree polynomial} of $L$ with respect to $(X,S)$, denoted
$\Phi_{X,S}^{2\mathrm{deg}^+}(L)$, to be 
%the sum over the set of edges in the quandle coloring quiver $\mathcal{Q}_{X,S}(L)$ of 
\[\Phi_{X,S}^{2\mathrm{deg}^+}(L)
=\sum_{e\in E(\mathcal{Q}_{X,S}(L))} s^{\mathrm{deg}_+(S(e))}t^{\mathrm{deg}_+(T(e))}\]
where $S(e)$ and $T(e)$ are the vertices at the source and target of the
edge $e$ respectively and $E(\mathcal{Q}_{X,S}(L))$ is the set of edges 
in the quiver.
\end{definition}

We then have:
\begin{proposition}
The two-variable in-degree polynomial is an invariant of oriented 
classical and virtual knots and links.
\end{proposition}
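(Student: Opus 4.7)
The plan is to reduce the statement to the already-established invariance of the quandle coloring quiver itself, which is proved in \cite{CN}. Since $\Phi_{X,S}^{2\mathrm{deg}^+}(L)$ is defined purely from the directed-graph data of $\mathcal{QCQ}_{X,S}(L)$ — for each edge, one records the in-degrees of its source and target and sums the corresponding monomials — it suffices to show two things: first, that the quiver itself is an invariant up to isomorphism of directed multigraphs, and second, that the polynomial depends only on this isomorphism class.

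For the first point I would simply cite \cite{CN}: for any two diagrams $D, D'$ of $L$ related by a finite sequence of oriented Reidemeister moves (classical or virtual), there is a bijection
\[\Psi : \mathrm{Hom}(\mathcal{Q}(D), X) \longrightarrow \mathrm{Hom}(\mathcal{Q}(D'), X)\]
that intertwines the $S$-action in the sense that $\Psi(\sigma g) = \sigma\, \Psi(g)$ for every $\sigma \in S$. This intertwining means that $\Psi$ sends each edge $f \to g$ of $\mathcal{QCQ}_{X,S}(D)$, which by definition exists precisely when $f = \sigma g$ for some $\sigma \in S$, to the edge $\Psi(f) \to \Psi(g)$ of $\mathcal{QCQ}_{X,S}(D')$, giving an isomorphism of directed multigraphs.

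For the second point I would observe that any isomorphism of directed multigraphs preserves the in-degree of every vertex and restricts to a bijection on edges that is compatible with the source and target maps. Consequently, for each edge $e$ of $\mathcal{QCQ}_{X,S}(D)$ the monomial $s^{\mathrm{deg}_+(S(e))}\,t^{\mathrm{deg}_+(T(e))}$ equals the corresponding monomial assigned to $\Psi(e)$ in $\mathcal{QCQ}_{X,S}(D')$. Summing over edges and using the edge bijection induced by $\Psi$ gives
\[\sum_{e \in E(\mathcal{QCQ}_{X,S}(D))} s^{\mathrm{deg}_+(S(e))} t^{\mathrm{deg}_+(T(e))} = \sum_{e' \in E(\mathcal{QCQ}_{X,S}(D'))} s^{\mathrm{deg}_+(S(e'))} t^{\mathrm{deg}_+(T(e'))},\]
which is exactly the invariance claim.

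The main obstacle here is purely bookkeeping; the substantive content is already packaged in the quiver invariance of \cite{CN}. The only thing one must be careful about is that $\Psi$ intertwines $S$ on the correct side (so that edges, not just vertices, are preserved), and that in-degree — rather than out-degree — is the graph-theoretic quantity being summed; once these are pinned down, invariance of the two-variable in-degree polynomial follows from the elementary fact that directed-multigraph isomorphisms preserve vertex in-degrees.
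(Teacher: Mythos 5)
Your proposal is correct and follows essentially the same route as the paper: the paper's proof likewise reduces everything to the Reidemeister-invariance of the quandle coloring quiver from \cite{CN} and then notes that the edge set and endpoint in-degrees are determined by the quiver. You simply make explicit the bijection of homsets, the $S$-intertwining, and the resulting edge bijection, which the paper leaves implicit.
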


\begin{proof}
Changing the diagram $D$ by Reidemeister moves does not change the quandle 
coloring quiver; it follows immediately that the set of edges and the 
in-degrees of the vertices at their endpoints are also invariant under 
Reidemeister moves.
\end{proof}

\begin{example} \label{ex1}
Let $X$ be the quandle with operation table
\[\begin{array}{r|rrr}
\tr & 1 & 2 & 3 \\ \hline
1 & 1 & 1 & 2 \\
2 & 2 & 2 & 1 \\
3 & 3 & 3 & 3
\end{array}
\]
and consider the Hopf link $L$. There are five $X$-colorings of $L$; 
selecting the endomorphisms $S=\{[1,1,2],[2,2,1]\}$ we obtain quandle 
coloring quiver
\[\includegraphics{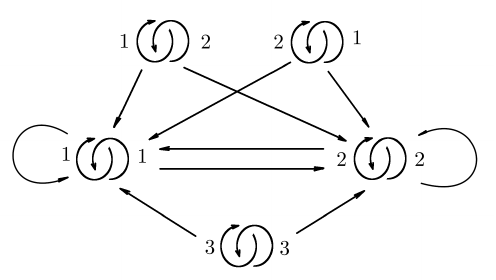}.\]
Replacing each diagram with a vertex labeled with its in-degree, we have
\[\includegraphics{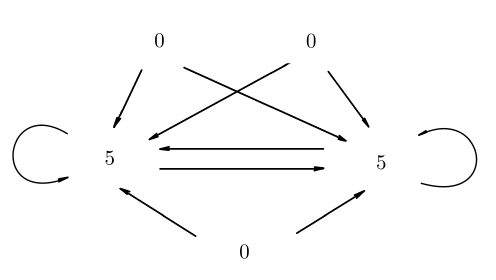}.\]
Then summing over the edges we have two-variable in-degree polynomial
$\Phi_{X,S}^{2\mathrm{deg}^+}(L)=6s^0t^5+4s^5t^5$.
\end{example}

The 2-variable in-degree polynomial is an \textit{enhancement} of the 
quandle counting invariant, meaning that the quandle counting invariant 
can be recovered from the 2-variable in-degree polynomial. This follows 
from the fact that the original in-degree polynomial in an enhancement of
the quandle counting invariant and the following proposition.

%The relationship between the original in-degree polynomial and the 
%new 2-variable version is given in the next proposition.

\begin{proposition}
The two-variable in-degree polynomial $\Phi_{X,S}^{2\mathrm{deg}^+}(L)$ 
satisfies
\[\left.\Phi_{X,S}^{2\mathrm{deg}^+}(L)\right|_{s=1} 
= |S|\Phi_{X,S}^{\mathrm{deg}^+}(L);\]
that is, the in-degree polynomial $\Phi_{X,S}^{\mathrm{deg}^+}(L)$ can be obtained
from the two-variable in-degree polynomial by evaluating at $s=1$ and dividing 
by $|S|$.
\end{proposition}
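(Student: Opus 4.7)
The plan is to unfold the definition of $\Phi_{X,S}^{2\mathrm{deg}^+}(L)$ under the substitution $s=1$ and repartition the resulting edge-indexed sum as a vertex-indexed sum, relying on the structural fact, highlighted in Section \ref{R}, that every vertex of $\mathcal{Q}_{X,S}(L)$ has the same out-degree $|S|$. This property is immediate from the construction: for each coloring $f$ and each endomorphism $\sigma \in S$, there is precisely one edge leaving $f$ (targeting $\sigma \circ f$), so $|\{e \in E(\mathcal{Q}_{X,S}(L)) : S(e)=v\}| = |S|$ for every vertex $v$.

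First I would substitute $s=1$ into $\sum_{e} s^{\mathrm{deg}_+(S(e))} t^{\mathrm{deg}_+(T(e))}$, which collapses the source factor to $1$ and leaves $\sum_{e \in E(\mathcal{Q}_{X,S}(L))} t^{\mathrm{deg}_+(T(e))}$. Next, I would rewrite this edge sum as a sum over vertices by pairing each vertex with the edges whose $t$-exponent records its in-degree. Using the constant-out-degree identity, each vertex contributes exactly $|S|$ matching edges, so the multiplicity $|S|$ pulls out as a global factor, yielding $|S| \sum_{v \in V} t^{\mathrm{deg}_+(v)} = |S| \Phi_{X,S}^{\mathrm{deg}^+}(L)$.

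The only point of care is the regrouping step itself, which is a purely combinatorial bookkeeping exercise matching edges against the vertex whose in-degree contributes to the exponent. Since the constant-out-degree observation is the sole nontrivial ingredient and it is already on hand from the definition of the quiver, no deeper obstacle is anticipated, and the argument should occupy only a few lines. The dividing-by-$|S|$ statement in the second clause then follows immediately by dividing both sides of the equality by $|S|$.
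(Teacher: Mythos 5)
The paper states this proposition without a proof, so there is nothing to compare your argument against directly; but your argument has a genuine gap in the regrouping step, and the gap conceals a real problem with the statement itself. After setting $s=1$ you are left with $\sum_{e} t^{\mathrm{deg}_+(T(e))}$, where the exponent is the in-degree of the \emph{target} of $e$. To convert this into a vertex-indexed sum you must therefore group edges by their target, and the number of edges with target $v$ is $\mathrm{deg}_+(v)$, not $|S|$: the constant-out-degree fact you invoke counts the edges leaving $v$, i.e.\ those with \emph{source} $v$. What the regrouping actually yields is $\sum_{v} \mathrm{deg}_+(v)\, t^{\mathrm{deg}_+(v)}$, in which the multiplicity depends on $v$ and cannot be pulled out as a global factor $|S|$. (If you instead group by source, where the multiplicity genuinely is $|S|$, the exponent $\mathrm{deg}_+(T(e))$ varies within each group and again nothing factors out.) The paper's own Example \ref{ex1} shows that the identity fails under your reading of $\Phi_{X,S}^{\mathrm{deg}^+}$ as $\sum_v t^{\mathrm{deg}_+(v)}$: there the in-degrees are $(0,0,0,5,5)$ with $|S|=2$, so $\left.\Phi_{X,S}^{2\mathrm{deg}^+}(L)\right|_{s=1}=10t^5$ while $|S|\sum_v t^{\mathrm{deg}_+(v)}=2(3+2t^5)=6+4t^5$.

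Your constant-out-degree idea is the right tool, but for the other variable: setting $t=1$ leaves $\sum_e s^{\mathrm{deg}_+(S(e))}$, and grouping by source gives exactly $|S|$ edges per vertex, all with the common exponent $\mathrm{deg}_+(v)$, so $\left.\Phi_{X,S}^{2\mathrm{deg}^+}(L)\right|_{t=1}=|S|\sum_v s^{\mathrm{deg}_+(v)}$ does follow by precisely your argument. For the $s=1$ substitution, the paper's computed examples indicate that $\Phi_{X,S}^{\mathrm{deg}^+}$ is in fact being used as the edge sum $\sum_e t^{\mathrm{deg}_+(T(e))}$ (e.g.\ $\Phi_{X,S}^{\mathrm{deg}^+}(L7a3)=12t+2t^2+8t^4+10t^{10}$ has coefficient sum $32=|S|\cdot 16$ rather than the number of colorings $16$, and equals the stated $\Phi_{X,S}^{2\mathrm{deg}^+}(L7a3)$ evaluated at $s=1$ with no extra factor); under that reading the $s=1$ evaluation recovers $\Phi_{X,S}^{\mathrm{deg}^+}(L)$ on the nose and the factor $|S|$ in the proposition is spurious. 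Either way, the proof as written does not establish the displayed identity, and you should first pin down which definition of the in-degree polynomial and which substitution is intended.
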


The next example shows that the two-variable in-degree polynomial is a proper
enhancement both of the quandle counting invariant and the original
in-degree polynomial.

\begin{example}
Let $X$ be the quandle with operation table
\[\begin{array}{r|rrrr}
\tr & 1 & 2 & 3 & 4 \\ \hline
1 & 1 & 1 & 4 & 3 \\
2 & 2 & 2 & 2 & 2 \\
3 & 4 & 3 & 3 & 1 \\
4 & 3 & 4 & 1 & 4
\end{array}\]
and let $S=\{[1,2,4,3],[2,1,2,2]\}$. Then our \texttt{python} computations
show that the links $L7a3$ and $L7a5$ both have 16 $X$-colorings and both 
have in-degree polynomial value 
\[\Phi_{X,S}^{\mathrm{deg}^+}(L7a3)
=12t+2t^2+8t^4+10t^{10}
=\Phi_{X,S}^{\mathrm{deg}^+}(L7a5)\]
while the two-variable polynomial values
\[\Phi_{X,S}^{2\mathrm{deg}^+}(L7a3)
=s^{10}t^{10}+s^{10}t^4+s^4t^{10}+2s^4t^4+s^4t^2+s^2t^4+s^2t^2+8st^{10}+4st^4+12st
\]
and
\[\Phi_{X,S}^{2\mathrm{deg}^+}(L7a5)
=s^{10}t^{10}+s^{10}t^2+4s^4t^4+s^2t^{10}+s^2t^2+8st^{10}+4st^4+12st
\]
distinguish the links.
\end{example}

\begin{example} \label{ex:table1}
Using our \texttt{python} code, we computed the 2-variable in-degree 
polynomial for the prime classical links with up to seven crossings as 
found in \cite{KA} for the quandle $X$ with operation table 
\[\begin{array}{r|rrr}
\tr & 1 & 2 & 3 \\ \hline
1 & 1 & 1 & 2 \\
2 & 2 & 2 & 1 \\
3 & 3 & 3 & 3
\end{array}
\] 
arising from the full quiver, i.e., using the entire
endomorphism set. The results are in the table.
\[\scalebox{0.95}{$
\begin{array}{r|l}
L & \Phi_{X,S}^{2\mathrm{deg}^+}(L) \\ \hline
L2a1 & 12s^{12}t^{12} + 2s^{12}t^7 + 4s^7t^{12} + 3s^7t^7 + 8s^2t^{12} + 2s^2t^7 + 4s^2t^2\\
L4a1 & 12s^{16}t^{16} + 2s^{16}t^{11} + 4s^{11}t^{16} + 3s^{11}t^{11} + 8s^6t^{16} + 2s^6t^{11} + 4s^6t^6 + 8s^2t^{16} + 4s^2t^{11} + 8s^2t^6 + 8s^2t^2\\
L5a1 & 12s^{16}t^{16} + 2s^{16}t^{11} + 4s^{11}t^{16} + 3s^{11}t^{11} + 8s^6t^{16} + 2s^6t^{11} + 4s^6t^6 + 8s^2t^{16} + 4s^2t^{11} + 8s^2t^6 + 8s^2t^2\\
L6a1 & 12s^{16}t^{16} + 2s^{16}t^{11} + 4s^{11}t^{16} + 3s^{11}t^{11} + 8s^6t^{16} + 2s^6t^{11} + 4s^6t^6 + 8s^2t^{16} + 4s^2t^{11} + 8s^2t^6 + 8s^2t^2\\
L6a2 & 12s^{12}t^{12} + 2s^{12}t^7 + 4s^7t^{12} + 3s^7t^7 + 8s^2t^{12} + 2s^2t^7 + 4s^2t^2\\
L6a3 & 12s^{12}t^{12} + 2s^{12}t^7 + 4s^7t^{12} + 3s^7t^7 + 8s^2t^{12} + 2s^2t^7 + 4s^2t^2\\
L6a4 & 12s^{38}t^{38} + 2s^{38}t^{29} + 4s^{29}t^{38} + 3s^{29}t^{29} + 24s^8t^{38} + 6s^8t^{29} + 12s^8t^8 + 36s^2t^{38} + 18s^2t^{29} + 36s^2t^8 + 36s^2t^2\\
L6a5 & 12s^{26}t^{26} + 2s^{26}t^{17} + 4s^{17}t^{26} + 3s^{17}t^{17} + 24s^4t^{26} + 6s^4t^{17} + 12s^4t^4 + 12s^2t^{26} + 6s^2t^{17} + 12s^2t^4 + 12s^2t^2\\
L6n1 & 12s^{26}t^{26} + 2s^{26}t^{17} + 4s^{17}t^{26} + 3s^{17}t^{17} + 24s^4t^{26} + 6s^4t^{17} + 12s^4t^4 + 12s^2t^{26} + 6s^2t^{17} + 12s^2t^4 + 12s^2t^2\\
L7a1 & 12s^{16}t^{16} + 2s^{16}t^{11} + 4s^{11}t^{16} + 3s^{11}t^{11} + 8s^6t^{16} + 2s^6t^{11} + 4s^6t^6 + 8s^2t^{16} + 4s^2t^{11} + 8s^2t^6 + 8s^2t^2\\
L7a2 & 12s^{16}t^{16} + 2s^{16}t^{11} + 4s^{11}t^{16} + 3s^{11}t^{11} + 8s^6t^{16} + 2s^6t^{11} + 4s^6t^6 + 8s^2t^{16} + 4s^2t^{11} + 8s^2t^6 + 8s^2t^2\\
L7a3 & 12s^{16}t^{16} + 2s^{16}t^{11} + 4s^{11}t^{16} + 3s^{11}t^{11} + 8s^6t^{16} + 2s^6t^{11} + 4s^6t^6 + 8s^2t^{16} + 4s^2t^{11} + 8s^2t^6 + 8s^2t^2\\
L7a4 & 12s^{16}t^{16} + 2s^{16}t^{11} + 4s^{11}t^{16} + 3s^{11}t^{11} + 8s^6t^{16} + 2s^6t^{11} + 4s^6t^6 + 8s^2t^{16} + 4s^2t^{11} + 8s^2t^6 + 8s^2t^2\\
L7a5 & 12s^{12}t^{12} + 2s^{12}t^7 + 4s^7t^{12} + 3s^7t^7 + 8s^2t^{12} + 2s^2t^7 + 4s^2t^2\\
L7a6 & 12s^{12}t^{12} + 2s^{12}t^7 + 4s^7t^{12} + 3s^7t^7 + 8s^2t^{12} + 2s^2t^7 + 4s^2t^2\\
L7a7 & 12s^{26}t^{26} + 2s^{26}t^{17} + 4s^{17}t^{26} + 3s^{17}t^{17} + 24s^4t^{26} + 6s^4t^{17} + 12s^4t^4 + 12s^2t^{26} + 6s^2t^{17} + 12s^2t^4 + 12s^2t^2\\
L7n1 & 12s^{16}t^{16} + 2s^{16}t^{11} + 4s^{11}t^{16} + 3s^{11}t^{11} + 8s^6t^{16} + 2s^6t^{11} + 4s^6t^6 + 8s^2t^{16} + 4s^2t^{11} + 8s^2t^6 + 8s^2t^2\\
L7n2 & 12s^{16}t^{16} + 2s^{16}t^{11} + 4s^{11}t^{16} + 3s^{11}t^{11} + 8s^6t^{16} + 2s^6t^{11} + 4s^6t^6 + 8s^2t^{16} + 4s^2t^{11} + 8s^2t^6 + 8s^2t^2
\end{array}$}
\]
\end{example}

\section{\large\textbf{Maximal Path Polynomial}}\label{MP}

\begin{definition}
Let $X$ be a finite quandle, $S\subset \mathrm{Hom}(X,X)$ a set of 
endomorphisms of $X$, and $L$ an oriented classical or virtual knot
or link represented by a diagram $D$. We define the \textit{maximal path
polynomial} of $L$ with respect to $X,S$ to be the sum over the set of 
maximal non-repeating paths $p$ in $\mathcal{Q}_{X,S}(L)$ of $q^{|p|}$
where $|p|$ is the length of $p$, i.e.,
\[\Phi_{X,S}^{MP}(L)=\sum_{p\in MP} x+q^{|p|}.\]
\end{definition}

\begin{example}
Consider the quandle coloring quiver $\mathcal{Q}_{X,S}(L)$ 
from Example \ref{ex1}. 
\[\scalebox{0.9}{\includegraphics{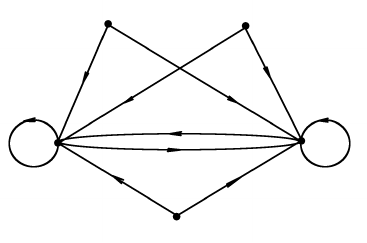}}\]
There is a central cycle of length 4 and a total of six edges leading into 
the cycle; from each such edge, there are two possible ways to go around 
the central cycle, namely starting with the loop first or coming back to 
it at the end. Thus, we have a total of 12 maximal non-repeating paths,
each of which has length 5. We illustrate two such paths by numbering edges 
in order:
\[\scalebox{0.9}{\includegraphics{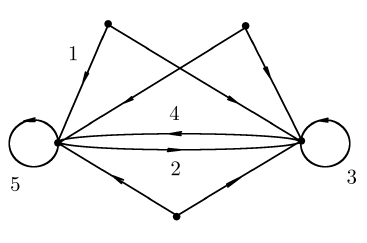}\quad \includegraphics{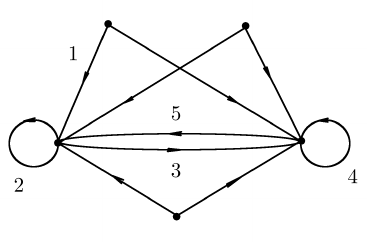}}\]
Then the maximal path polynomial for the Hopf link with respect to the quandle 
and endomorphism set from Example \ref{ex1} is $\Phi_{X,S}^{MP}(L2a1)=12q^5$.
\end{example}

The next example shows that the maximal path polynomial is not determined by 
the quandle counting invariant.

\begin{example}
Let $X$ be the quandle with operation table
\[\begin{array}{r|rrrrr}
\tr & 1 & 2 & 3 & 4 & 5 \\ \hline
1 & 1 & 3 & 2 & 2 & 3 \\
2 & 3 & 2 & 1 & 3 & 1 \\
3 & 2 & 1 & 3 & 1 & 2 \\
4 & 5 & 5 & 5 & 4 & 4 \\
5 & 4 & 4 & 4 & 5 & 5
\end{array}\]
and let $S=\{[1,3,2,5,4],[5,5,5,4,4]\}$. Then our \texttt{python}
computations show that the links $L6a3$ and $L7a2$ both have 13
$X$-colorings but are distinguished by the associated maximal path polynomial
with
\[\Phi_{X,S}^{MP}(L6a3)=12q^6+32q^7\]
while
\[\Phi_{X,S}^{MP}(L7a2)=8q^6+32q^7+24q^8+24q^9.\]
\end{example}

\begin{example} \label{ex:table2}
Using 
our \texttt{python} code we computed the maximal path  
polynomial for the prime classical links with up to seven crossings as 
found in \cite{KA} for the quandle $X$ with operation table 
\[\begin{array}{r|rrrr}
\tr & 1 & 2 & 3 \\ \hline
1 & 1 & 1 & 2 \\
2 & 2 & 2 & 1 \\
3 & 3 & 3 & 3
\end{array}
\] 
arising from the quiver with endomorphism set $S=\{[2,4,3,1],[2,2,4,2]\}$. 
The results are in the tables.
\[
\begin{array}{r|l}
L & \Phi_{X,S}^{MP}(L) \\ \hline
L2a1 & 24q^8 + 20q^7 + 4q^5 \\
L4a1 & 24q^8 + 20q^7 + 4q^5 \\
L5a1 & 24q^{12} + 36q^{11} + 36q^{10} + 36q^9 + 28q^8 + 16q^7 + 4q^5 \\
L6a1 & 24q^8 + 20q^7 + 4q^5 \\
L6a2 & 24q^{12} + 36q^{11} + 36q^{10} + 36q^9 + 28q^8 + 16q^7 + 4q^5 \\
L6a3 & 24q^{12} + 36q^{11} + 36q^{10} + 36q^9 + 28q^8 + 16q^7 + 4q^5 \\
L6a4 & 144q^{12} + 216q^{11} + 216q^{10} + 216q^9 + 144q^8 + 44q^7 + 4q^5 \\
L6a5 & 96q^8 + 56q^7 + 4q^5 \\
L6n1 & 96q^8 + 56q^7 + 4q^5 \\
\end{array} \]
\[
\begin{array}{r|l}
L & \Phi_{X,S}^{MP}(L) \\ \hline
L7a1 & 24q^{12} + 36q^{11} + 36q^{10} + 36q^9 + 28q^8 + 16q^7 + 4q^5 \\
L7a2 & 24q^8 + 20q^7 + 4q^5 \\
L7a3 & 24q^{12} + 36q^{11} + 36q^{10} + 36q^9 + 28q^8 + 16q^7 + 4q^5 \\
L7a4 & 24q^{12} + 36q^{11} + 36q^{10} + 36q^9 + 28q^8 + 16q^7 + 4q^5 \\
L7a5 & 24q^8 + 20q^7 + 4q^5 \\
L7a6 & 24q^8 + 20q^7 + 4q^5 \\
L7a7 & 24q^{12} + 36q^{11} + 36q^{10} + 36q^9 + 100q^8 + 52q^7 + 4q^5 \\
L7n1 & 24q^8 + 20q^7 + 4q^5 \\
L7n2 & 24q^{12} + 36q^{11} + 36q^{10} + 36q^9 + 28q^8 + 16q^7 + 4q^5.
\end{array}
\]

\end{example}

\section{\large\textbf{Questions}}\label{Q}

The examples given in this paper are toy examples in the sense that they use
only small quandles, small sets of endomorphisms and small crossing-number
knots and links for the sake of quick computability. Fast algorithms
for working with larger quandles, larger quivers and larger knots and links
are of great interest.

Of course the primary question is ``What other decategorifications and 
further enhancements can be derived from quandle coloring quivers?'' We 
anticipate many more examples of these in future work.

The in-degree polynomials are enhancements of the quandle counting invariant
in the sense that they determine the quandle counting invariant. Is the same
true of the maximal path polynomial? It seems like it shouldn't be true,
but it is not clear how to reconstruct a quiver from a maximal path 
polynomial.

\bibliography{ak-sn}{}

\begin{thebibliography}{1}

\bibitem{KA}
D.~Bar-Natan.
\newblock The knot atlas \textup{http://katlas.org/wiki/Main\_Page}.

\bibitem{B}
E.~Brieskorn.
\newblock Automorphic sets and braids and singularities.
\newblock In {\em Braids ({S}anta {C}ruz, {CA}, 1986)}, volume~78 of {\em
  Contemp. Math.}, pages 45--115. Amer. Math. Soc., Providence, RI, 1988.

\bibitem{CN}
K.~Cho and S.~Nelson.
\newblock Quandle coloring quivers.
\newblock {\em Journal of Knot Theory and Its Ramifications}, 28(01):1950001,
  2019.

\bibitem{EN}
M.~Elhamdadi and S.~Nelson.
\newblock {\em Quandles---an introduction to the algebra of knots}, volume~74
  of {\em Student Mathematical Library}.
\newblock American Mathematical Society, Providence, RI, 2015.

\bibitem{FR}
R.~Fenn and C.~Rourke.
\newblock Racks and links in codimension two.
\newblock {\em J. Knot Theory Ramifications}, 1(4):343--406, 1992.

\bibitem{J}
D.~Joyce.
\newblock A classifying invariant of knots, the knot quandle.
\newblock {\em J. Pure Appl. Algebra}, 23(1):37--65, 1982.

\bibitem{M}
S.~V. Matveev.
\newblock Distributive groupoids in knot theory.
\newblock {\em Mat. Sb. (N.S.)}, 119(161)(1):78--88, 160, 1982.

\bibitem{T}
M.~Takasaki.
\newblock Abstraction of symmetric transformations.
\newblock {\em T\^ohoku Math. J.}, 49:145--207, 1943.

\end{thebibliography}
\bibliographystyle{abbrv}

\bigskip

\medskip

\noindent
\textsc{Department of Mathematical Sciences \\
Claremont McKenna College \\
850 Columbia Ave. \\
Claremont, CA 91711}

\end{document}